 \newtheorem{Theorem}{Theorem}[section]
 \newtheorem{lemma}[Theorem]{Lemma}
 \newtheorem{proposition}[Theorem]{Proposition}
 \theoremstyle{definition}
 \theoremstyle{remark}
 \newtheorem{rem}[Theorem]{Remark}
  \theoremstyle{remark}
 \numberwithin{equation}{section}
\newcommand{\R}{\mathbb{R}}
\newcommand{\N}{\mathbb{N}}
\newcommand{\Z}{\mathbb{Z}}
\newcommand{\CosineC}{\left(C(t)\right)_{t\in\mathbb R}}
\newcommand{\cosinec}{\left(c(t)\right)_{t\in\mathbb R}}
\begin{document}

\title[A $0-2$ law for $\limsup$ to $\infty$]
 {A $0-2$ law for cosine families with $\limsup$ to $\infty$}

%
%
\author[Schwenninger]{Felix L. Schwenninger}
\address{Department of Applied Mathematics, \\ University of Twente, P.O. Box 217, \\ 7500 AE Enschede, The Netherlands}
\email{f.l.schwenninger@utwente.nl}
\thanks{The first named author has been supported by the Netherlands Organisation for Scientific Research (NWO), grant no. 613.001.004.}
\author[Zwart]{Hans Zwart}
\address{Department of Applied Mathematics, \\ University of Twente, P.O. Box 217, \\ 7500 AE Enschede, The Netherlands}
\email{h.j.zwart@utwente.nl}
\subjclass{Primary 47D09; Secondary 47D06}

\keywords{Cosine families, Semigroup of operators, Zero-two law, Zero-one law}
\date{\today}

\begin{abstract} 
For $\left(C(t)\right)_{t\in\mathbb R}$ being a cosine
family on a unital normed algebra, we show that the estimate $\limsup_{t\to
  \infty^{+}}\|C(t) - I\| <2$ implies that $C(t)=I$ for all $t\in\mathbb R$.
This generalizes the result that $\sup_{t\geq0}\|C(t)-I\|<2$ yields that $C(t)=I$ for all
$t\geq0$. We also state the corresponding result for discrete cosine families and for semigroups. 
\end{abstract}

\maketitle

\section{Introduction}
In the recent past, laws of the form 
\begin{align}
\limsup_{t\to0} \|C(t)-I \| < r &\implies \lim_{t\to0} \|C(t)-I\|=0,\tag{{\em limsup-law}}\label{limsup}\\
\sup_{t\in\mathbb{ R}}\|C(t)-I\| <r &\implies C(t)=I \ \forall t\in\mathbb{R}, \tag{{\em sup-law}}\label{sup}
\end{align}
where $r>0$ and $\left(C(t)\right)_{t\in\R}$ is a cosine family of elements in a unital Banach algebra $A$ (with identity element $I$) were studied, see \cite{UlmerSeminare2012,BobrowskiZeroTwo,ChojnackiZeroTwo,EsterleZeroTwo} and \cite{Fackler2013,SchweZwaZeroTwo} for the special case where $\CosineC$ is strongly continuous and $A=\mathcal{B}(X)$ is the Banach algebra of bounded operators on a Banach space $X$. 
For both, the \ref{limsup} and the \ref{sup} the largest possible constant $r$ was shown to be $2$.

In this note we consider the condition
\begin{equation}\label{limsupinf}
	\limsup_{t\to\infty} \|C(t)-I\| < 2,
\end{equation}
which is weaker than the premise in the \ref{sup}, and show that
\begin{equation}\label{limsupinflaw}
\tag{\em limsup-$\infty$-law} \limsup_{t\to\infty} \|C(t)-I \| < r \implies C(t)=I \ \forall t\in\R,
\end{equation}
 for $r=2$ holds, see Theorem \ref{thm:limsupinftwolaw}. A related question was raised in \cite[Remark 2.6]{SchweZwaLessthanone} for, more general, scaled versions of these laws. More precisely, it was asked whether for $a\geq0$ the following holds for some $r$,
\begin{equation}\label{scaledlimsupinflaw}
\limsup_{t\to \infty} \|C(t)-cos(at)\|<r \implies C(t)=\cos(at)\ \forall t\in\R.
\end{equation}
Let us mention that `scaled version' (where the unity element $I$ gets replaced by $\cos(at)I$) of \ref{limsup} and \ref{sup} have a different optimal constant $r=\frac{8}{3\sqrt{3}}$, see \cite{BobrowskiZeroTwo,ChojnackiOneTwo,Esterle2015}.

In the following, we show that (\ref{limsupinflaw}) holds, using techniques by J. Esterle \cite{EsterleZeroTwo}. Finally we state the corresponding result for semigroups, for which \textit{zero-one-laws} have been studied much earlier than for cosine families.
\section{A $\limsup_{t\to\infty}$- law}
In the following, for a unital normed algebra $A$, let $I$ denote the identity element.
\begin{lemma}\label{lem1}
Let $\CosineC$ be a cosine family in a unital Banach algebra. If
\begin{equation*}
\limsup_{t\to \infty} \| C(t)-I \|=0,
\end{equation*}		
then $C(t)=I$ for all $t\in\R$.
\end{lemma}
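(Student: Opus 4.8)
The plan is to exploit the defining d'Alembert functional equation of a cosine family,
\begin{equation*}
C(t+s) + C(t-s) = 2C(t)C(s), \qquad s,t\in\mathbb{R},
\end{equation*}
together with $C(0)=I$, and to pass to the limit $t\to\infty$ with $s$ held fixed. First I would record that, since the norm is nonnegative, the hypothesis $\limsup_{t\to\infty}\|C(t)-I\|=0$ is in fact equivalent to $\lim_{t\to\infty}\|C(t)-I\|=0$, that is, $C(t)\to I$ in $A$ as $t\to\infty$. Fixing an arbitrary $s\in\mathbb{R}$, each of the arguments $t+s$, $t-s$ and $t$ tends to $\infty$ as $t\to\infty$, so that $C(t+s)\to I$, $C(t-s)\to I$ and $C(t)\to I$.

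Next I would invoke the continuity of multiplication in the Banach algebra: from submultiplicativity we have $\|C(t)C(s)-C(s)\| \le \|C(t)-I\|\,\|C(s)\|$, so that $C(t)C(s)\to C(s)$ as $t\to\infty$. Letting $t\to\infty$ in the functional equation then gives
\begin{equation*}
I + I = 2C(s),
\end{equation*}
whence $C(s)=I$. Since $s\in\mathbb{R}$ was arbitrary, this yields $C(t)=I$ for all $t\in\mathbb{R}$, as claimed.

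I do not anticipate a genuine obstacle here; the argument is elementary and self-contained. The only points that warrant a word of justification are the passage from $\limsup$ to $\lim$ (immediate from the nonnegativity of the norm) and the convergence of the product $C(t)C(s)$, which follows at once from submultiplicativity. This lemma is the degenerate, $r\to 0$ end of the phenomenon and is intended to serve as an orienting base case for the quantitatively harder estimate $\limsup_{t\to\infty}\|C(t)-I\|<2$ treated in Theorem \ref{thm:limsupinftwolaw}, where the limit can no longer be read off directly from the functional equation.
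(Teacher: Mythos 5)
Your argument is exactly the paper's: convert the vanishing $\limsup$ into a genuine limit, let $t\to\infty$ in d'Alembert's identity $C(t+s)+C(t-s)=2C(t)C(s)$, and conclude $2I=2C(s)$. The extra justification you give for the convergence of $C(t)C(s)$ via submultiplicativity is correct and merely makes explicit a step the paper leaves implicit.
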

\begin{proof}
From the assumption follows that  $\lim_{t\to\infty}\|C(t)-I\|=0$. By d'Alembert's defining identity for cosine families, 
\begin{equation}
C(t+s)+C(t-s)=2C(t)C(s),
\end{equation}
for all $s,t\in\R$. Thus, letting $t\to\infty$, we derive $2I=2C(s)$ for all $s\in\R$.
\end{proof}
The following lemma is a slight extension of Esterle's Lemma 2.1 in \cite{EsterleZeroTwo}, as we also allow for $t_{0}=\infty$. The proof is completely analogous the case case $t_{0}=0$.
\begin{lemma}\label{lem2}
Let $\cosinec$ be a complex-valued cosine family and $t_{0}\in\left\{0,\infty\right\}$. Then, we have one of the following situations.
\begin{enumerate}[label=(\roman*)]
\item $\limsup_{t\to t_{0}} |c(t)-1|=\infty$,
\item $\limsup_{t\to t_{0}}|c(t)-1|=2$,
\item \label{it3}$\limsup_{t\to t_{0}}|c(t)-1|=0$.
\end{enumerate}
Moreover, in case \ref{it3}, it follows that 
\begin{equation}\label{eq22}
	c(t)=\left\{\begin{array}{ccl}1&\text{if}& t_{0}=\infty,\\\cos(at)&\text{if}& t_{0}=0,\end{array} \right. 
\end{equation}
for some $a\geq0$.
\end{lemma}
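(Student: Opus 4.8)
The plan is to extract everything from d'Alembert's identity $c(t+s)+c(t-s)=2c(t)c(s)$ specialised in two ways: with $s=t$ it gives the doubling rule $c(2t)=2c(t)^2-1$, equivalently $c(2t)-1=2(c(t)-1)(c(t)+1)$, and iterating yields $c(nt)=T_n(c(t))$ with $T_n$ the $n$-th Chebyshev polynomial. Write $L=\limsup_{t\to t_0}|c(t)-1|$. If $L=\infty$ we are in case (i); so assume $L<\infty$, equivalently $M:=\limsup_{t\to t_0}|c(t)|<\infty$, and the goal is $L\in\{0,2\}$. The doubling steps below use only that $2t\to t_0$, which holds for both $t_0=0$ and $t_0=\infty$; the two-variable step is where the case $t_0=\infty$ (fixed $s$) and Esterle's $t_0=0$ (with $s\to 0$) run in parallel, explaining why the proofs are verbatim analogous.

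First I would bound the modulus. From $|c(t)|^2=\tfrac12|1+c(2t)|\le\tfrac12(1+|c(2t)|)$ together with $2t\to t_0$, a $\limsup$ comparison gives $M^2\le\tfrac12(1+M)$, hence $M\le 1$. Thus every subsequential limit $z$ of $c(t)$ as $t\to t_0$ satisfies $|z|\le 1$, and the compact set $K$ of all such limits obeys $P(K)\subseteq K$ with $P(z)=2z^2-1$, by the doubling rule. Since $P$ is conjugate to $\zeta\mapsto\zeta^2-2$ via $z=\zeta/2$, whose filled Julia set is $[-1,1]$, every point of $K$ has bounded $P$-orbit and therefore $K\subseteq[-1,1]$. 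Consequently $L=\max_{z\in K}(1-z)=1-m$ with $m:=\min K\in[-1,1]$, so $L\in[0,2]$; this already disposes of any value in $(2,\infty)$.

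The remaining, genuinely hard point is to exclude $m\in(-1,1)$. Forward invariance at the extremal point gives $P(m)\ge m$, i.e.\ $(2m+1)(m-1)\ge 0$, forcing $m\le-\tfrac12$. The essential obstacle is that the one-variable dynamics \emph{cannot} finish the job: the orbit $\{-\tfrac12,1\}$ is $P$-invariant and stays $\ge-\tfrac12$, so the single limit point $z=-\tfrac12$, giving $L=\tfrac32$, is consistent with everything used so far. To kill it one must invoke the full two-variable identity. Choosing $t_n\to t_0$ with $c(t_n)\to m$ and a second variable $s$, the relation $c(t_n+s)+c(t_n-s)=2c(t_n)c(s)\to 2m\,c(s)$ has real subsequential limits on the left (elements of $K\subseteq[-1,1]$); since $m\neq 0$ this forces $c(s)\in\mathbb{R}$ for every $s$, and the doubling rule then confines $c$ to $[-1,1]$ globally. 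A bounded real cosine family is, however, necessarily of the form $c=\cos\circ\phi$ with $\phi$ additive, and for any such $\phi$ one checks that $\liminf_{t\to t_0}\cos\phi(t)\in\{-1,1\}$ (dense orbit mod $2\pi$ in the wild case, oscillation in the linear case), contradicting $m\in(-1,-\tfrac12]$. Hence $m\in\{-1,1\}$, that is $L\in\{0,2\}$. I expect this reduction to the structure theorem for bounded real cosine families to be the main difficulty, precisely because the Chebyshev information is provably insufficient on its own.

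For the final claim, in case (iii) we have $\lim_{t\to t_0}|c(t)-1|=0$. If $t_0=\infty$ this is exactly the scalar instance of Lemma~\ref{lem1} (applied with $A=\mathbb{C}$), whence $c\equiv 1$. If $t_0=0$ it says that $c$ is continuous at $0$; d'Alembert's identity then propagates continuity to all of $\mathbb{R}$, and the classical classification of continuous scalar cosine functions yields $c(t)=\cos(at)$ for some $a\ge 0$, as asserted.
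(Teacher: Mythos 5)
Your first half is correct and is a genuinely different, self-contained route from the paper's (the paper merely defers to Esterle's Lemma~2.1 and only supplies the appeal to Lemma~\ref{lem1} for case~\ref{it3} at $t_{0}=\infty$): the bound $M\leq 1$, the inclusion $K\subseteq[-1,1]$ of the limit set via the Chebyshev/Julia-set observation, and the deduction $m\leq-\tfrac12$ or $m=1$ from $P(m)\geq m$ are all sound, and you correctly diagnose that the one-variable dynamics cannot exclude $L=\tfrac32$. The problem is that the step you yourself call ``the remaining, genuinely hard point'' is asserted rather than proved, and it is exactly where the content of the lemma lives. The claim that $\liminf_{t\to t_{0}}\cos\phi(t)\in\{-1,1\}$ for an additive $\phi:\R\to\R/2\pi\Z$ is the trichotomy in disguise, and the parenthetical ``dense orbit mod $2\pi$'' does not establish it: what is needed is not density of $\phi(\R)$ in the circle (which itself requires an argument --- $\phi(\R)$ is a divisible, hence trivial or dense, subgroup; $\phi$ need not lift to an additive map into $\R$, so the dense-graph property of discontinuous real additive functions is not available), but density of the \emph{limit set} of $\phi$ at $t_{0}$, i.e.\ of $\bigcap_{T}\overline{\phi([T,\infty))}$, resp.\ $\bigcap_{\varepsilon}\overline{\phi((0,\varepsilon))}$. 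That is true --- for $t_{0}=\infty$ the limit set is a nonempty closed set invariant under translation by every $\phi(s)$, for $t_{0}=0$ it is a closed divisible subgroup --- but the argument is absent, and without it the scenario $K=\{-\tfrac12,1\}$, $L=\tfrac32$ survives everything you have written.

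The second gap is that your reduction to a real-valued, globally bounded $c$ works only for $t_{0}=\infty$. For $t_{0}=0$ and fixed $s$, the points $t_{n}\pm s$ tend to $\pm s$, not to $0$, so the subsequential limits of $c(t_{n}+s)+c(t_{n}-s)$ are not controlled by $K$, and no version ``with $s\to0$'' yields the global statement $c(s)\in\R$; indeed it cannot, since in the $t_{0}=0$ setting $c$ need not be real-valued or bounded on $\R$ (the cosine family $c(t)=\cosh t$ has $\limsup_{t\to0}|c(t)-1|=0$). The same example shows that your closing classification for case~\ref{it3}, $t_{0}=0$, via ``continuity at $0$ propagates'' leads to $c(t)=\cos(at)$ with $a\in\C$, not $a\geq0$, unless a boundedness hypothesis is added. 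Since the paper only ever uses $t_{0}=\infty$, your skeleton can be completed for that case (and would then be a nice alternative to citing Esterle), but as written the crucial exclusion of $L=\tfrac32$ and the entire $t_{0}=0$ branch are gaps, not proofs.
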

\begin{proof}
As mentioned the proof is analogous to the one in \cite[Lemma 2.1]{EsterleZeroTwo}. In case \ref{it3} and $t_{0}=\infty$, it follows by Lemma \ref{lem1} that $c(t)=1$ for all $t\in\R$.
\end{proof}
\begin{proposition}\label{prop1}
Let $\CosineC$ be a cosine family on a unital Banach algebra $A$. If $\limsup_{t\to\infty}\rho\left(C(t)-I\right)<2$, then $\rho\left(C(t)-I\right)=0$ for all $t\in\R$.
\end{proposition}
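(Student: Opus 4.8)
The plan is to reduce the statement, which concerns the spectral radius $\rho$ in a possibly non-commutative algebra, to the scalar situation already handled in Lemma \ref{lem2}, via the Gelfand transform. First I would observe that the elements of a cosine family commute: from d'Alembert's identity one has $C(t)C(s)=\frac12\bigl(C(t+s)+C(t-s)\bigr)$, and since $C$ is even (which follows from $C(0)=I$ upon setting $t=0$ in the identity), the right-hand side is symmetric in $t$ and $s$, so $C(t)C(s)=C(s)C(t)$. Hence the closed subalgebra $B\subseteq A$ generated by $I$ and $\{C(t):t\in\R\}$ is a commutative unital Banach algebra. Because the spectral radius is given by $\rho(a)=\lim_{n\to\infty}\|a^{n}\|^{1/n}$, its value does not depend on whether it is computed in $A$ or in the closed subalgebra $B$, so we may work entirely in $B$.

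Next, let $\Delta(B)$ denote the character space of $B$. For a fixed $\chi\in\Delta(B)$ set $c_{\chi}(t):=\chi(C(t))$. Since $\chi$ is linear, multiplicative and unital, applying $\chi$ to d'Alembert's identity yields $c_{\chi}(t+s)+c_{\chi}(t-s)=2c_{\chi}(t)c_{\chi}(s)$ together with $c_{\chi}(0)=1$, so $(c_{\chi}(t))_{t\in\R}$ is a complex-valued cosine family. Moreover, by Gelfand theory for commutative unital Banach algebras,
\begin{equation*}
\rho\bigl(C(t)-I\bigr)=\sup_{\chi\in\Delta(B)}\bigl|\chi\bigl(C(t)-I\bigr)\bigr|=\sup_{\chi\in\Delta(B)}\bigl|c_{\chi}(t)-1\bigr|.
\end{equation*}

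Now I would transfer the hypothesis to each character. For fixed $\chi$ and every $t$ we have $|c_{\chi}(t)-1|\le\rho(C(t)-I)$, whence
\begin{equation*}
\limsup_{t\to\infty}|c_{\chi}(t)-1|\le\limsup_{t\to\infty}\rho\bigl(C(t)-I\bigr)<2.
\end{equation*}
Applying Lemma \ref{lem2} with $t_{0}=\infty$ then rules out the alternatives $(\mathrm{i})$ and $(\mathrm{ii})$, leaving only case \ref{it3}, so $\limsup_{t\to\infty}|c_{\chi}(t)-1|=0$ and, by \eqref{eq22}, $c_{\chi}(t)=1$ for all $t\in\R$. As this holds for every $\chi\in\Delta(B)$, the supremum displayed above vanishes, giving $\rho(C(t)-I)=0$ for all $t\in\R$.

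The bookkeeping is routine; the step that requires the most care is the reduction to the commutative setting, namely verifying that the relevant algebra is commutative and that passing to the generated closed subalgebra alters neither the value of $\rho$ nor the validity of the Gelfand identity $\rho(a)=\sup_{\chi}|\chi(a)|$. Once this is secured, the trichotomy of Lemma \ref{lem2} does the remaining work character by character.
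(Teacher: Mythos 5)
Your proof is correct and follows essentially the same route as the paper: apply characters to turn $C(t)$ into scalar cosine families, invoke Lemma \ref{lem2} with $t_{0}=\infty$ to get $\chi(C(t))=1$, and recover $\rho(C(t)-I)=0$ from the Gelfand identity. The only difference is that you make explicit the reduction to the commutative closed subalgebra generated by $I$ and the $C(t)$ (justified by the commutativity coming from d'Alembert's identity and the algebra-independence of $\rho(a)=\lim_{n}\|a^{n}\|^{1/n}$), a step the paper leaves implicit when it writes the character-space formula directly for $A$.
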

\begin{proof}
Let $\hat{A}$ denote the space of characters on $A$. For all $t\in\R$ we have that
\begin{equation}\label{eq:spectralradiuschi}
\rho\left(C(t)-I\right)=\sup_{\chi\in\hat{A}}|\chi\left(C(t)-I\right)|=\sup_{\chi\in\hat{A}} |\chi(C(t))-1|.
\end{equation}
Thus, by the assumption we get that $\limsup_{t\to\infty}|\chi(C(t))-1|<2$ for $\chi\in\hat{A}$. As  $\left(\chi\left(C(t)\right)\right)_{t\in\R}$ is a complex-valued cosine family, Lemma \ref{lem2} then implies that $\chi(C(t))=1$ for all $t\in\R$ and $\chi\in\hat{A}$. Using this in (\ref{eq:spectralradiuschi}), we deduce that $\rho(C(t)-I)=0$ for all $t\in\R$.
\end{proof}
As pointed by Esterle \cite{EsterleZeroTwo}, for a commutative unital Banach algebra $A$, for $x\in A$ with $\|x\|\leq1$ we can define
\begin{equation}
\sqrt{I-x}:=\sum_{n=0}^{\infty}(-1)^{n}\alpha_{n}x^{n},
\end{equation}
where $(-1)^{n}\alpha_{n}$, with $\alpha_{0}=1$, $\alpha_{n}=\frac{1}{n!}\frac{1}{2}(\frac{1}{2}-1)...(\frac{1}{2}-n+1)=(-1)^{n-1}\frac{1}{n2^{n-1}}\binom{2(n-1)}{n-1}$, $n>0$, are the Taylor coefficients of the function $z\to\sqrt{1-z}$ at the origin (with convergence radius equal to $1$).  
Since $(-1)^{n-1}\alpha_{n}>0$ for $n\geq1$,
\begin{equation}\label{ineq:sqrt}
\left\|I-\sqrt{I-x}\right\|\leq\sum_{n=1}^{\infty}|\alpha_{n}|\|x\|^{n}=\sum_{n=1}^{\infty}(-1)^{n-1}\alpha_{n}\|x\|^{n}=1-\sqrt{1-\|x\|}.
\end{equation}
\begin{lemma}[Esterle 2015, \cite{EsterleZeroTwo}]\label{lem3}
Let $\CosineC$ be a cosine family in a unital Banach algebra.If $\|C(2s)-I\|\leq2$ and that $\rho(C(s)-I)<1$ for some $s\in\R$, Then, 
\begin{equation*}
C(s)=\sqrt{I-\frac{I-C(2s)}{2}}, 
\end{equation*}
where the square root is defined as described above.
\end{lemma}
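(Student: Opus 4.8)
The plan is to reduce everything to the commutative closed subalgebra $B\subseteq A$ generated by $I$ and $C(s)$, where the square root is defined, and then to use the hypothesis $\rho(C(s)-I)<1$ to single out the \emph{principal} square root. First I would record the algebraic identity behind the statement: putting $t=s$ in d'Alembert's identity and using $C(0)=I$ gives $C(2s)+I=2C(s)^2$, hence
\[
C(s)^2 = \frac{I+C(2s)}{2} = I - x, \qquad x:=\frac{I-C(2s)}{2}.
\]
Since $\|C(2s)-I\|\le 2$ we have $\|x\|\le 1$, so $y:=\sqrt{I-x}$ is well defined by the series above and, by (\ref{ineq:sqrt}), satisfies $\|I-y\|\le 1-\sqrt{1-\|x\|}\le 1$. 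Observe that $x$, and hence $y$, lies in $B$ (indeed $C(2s)=2C(s)^2-I\in B$), and that $y^2=I-x=C(s)^2$.

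Working in $B$, where $C(s)$ and $y$ commute, I would factor
\[
(C(s)-y)(C(s)+y) = C(s)^2-y^2 = 0,
\]
so the statement reduces to showing that $C(s)+y$ is invertible. Writing $z:=(I-C(s))+(I-y)$, so that $C(s)+y=2I-z$, I would bound the spectral radius of $z$. Since $B$ is commutative the spectral radius is subadditive on it, $\rho(a+b)\le\rho(a)+\rho(b)$; together with $\rho(I-y)\le\|I-y\|\le 1$ and the invariance of the spectral radius under passage to subalgebras (whence $\rho_B(I-C(s))=\rho(C(s)-I)<1$), this gives
\[
\rho(z) \le \rho(I-C(s))+\rho(I-y) < 1+1 = 2.
\]
Thus the spectrum of $z$ is contained in the open disk of radius $2$, so $0\notin\sigma(2I-z)$, i.e.\ $C(s)+y$ is invertible. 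Multiplying the factorization by $(C(s)+y)^{-1}$ then yields $C(s)=y$, which is the assertion.

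The main obstacle is precisely the invertibility of $C(s)+y$: algebraically $I-x$ admits two square roots differing by a sign, and nothing in the identity $C(s)^2=I-x$ distinguishes them. The role of the hypothesis $\rho(C(s)-I)<1$ is exactly to exclude the wrong sign, and the cleanest way I see to exploit it is through the subadditivity of the spectral radius in the commutative subalgebra, which is why I would carry out the reduction to $B$ at the very start rather than treating it as cosmetic (the key point being that $\rho(a)=\lim_n\|a^n\|^{1/n}$ does not depend on the ambient algebra). One should also check the boundary case $\|x\|=1$, where (\ref{ineq:sqrt}) only gives $\|I-y\|\le 1$; this is harmless, since the strict inequality $\rho(I-C(s))<1$ already forces $\rho(z)<2$.
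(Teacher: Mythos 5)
Your argument is correct: the reduction to the commutative closed subalgebra generated by $I$ and $C(s)$, the factorization $(C(s)-y)(C(s)+y)=0$ with $y=\sqrt{I-x}$, and the use of $\rho(I-C(s))<1$ together with $\|I-y\|\le 1$ and subadditivity of the spectral radius on commuting elements to make $C(s)+y$ invertible is exactly the standard proof. The paper itself states this lemma as a citation to Esterle and gives no proof, and what you have written is essentially Esterle's original argument, so there is nothing to add beyond noting that your care about the boundary case $\|x\|=1$ and about the algebra-independence of $\rho$ is well placed.
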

With the above preparatory results, the \ref{limsupinflaw} is now easy to show. The proof can be done analogously to the one in \cite[Theorem 3.2]{EsterleZeroTwo}, which in turn can be seen as an elegant refinement of the technique used in the \textit{three-lines-proof} in \cite{UlmerSeminare2012}.
\begin{Theorem}\label{thm:limsupinftwolaw}
Let $\CosineC$ be a cosine family in a unital Banach algebra $A$. Then, $\limsup_{t\to\infty} \|C(t)-I \| < 2$ implies that 
$C(t)=I$ for all $t\in\R$.
\end{Theorem}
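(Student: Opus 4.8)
The plan is to reduce to the commutative case and then run Esterle's doubling/square-root argument, but with limits superior at infinity in place of suprema. First I would replace $A$ by the closed unital subalgebra $B$ generated by $\{C(t):t\in\R\}$, which is commutative: putting $t=0$ in the d'Alembert identity gives $C(-s)=C(s)$, whence $C(t)C(s)=C(s)C(t)$ for all $s,t\in\R$. This reduction is harmless, since $B$ carries the norm inherited from $A$ and the desired conclusion $C(t)=I$ concerns the elements themselves; moreover Proposition \ref{prop1}, the estimate (\ref{ineq:sqrt}) and Lemma \ref{lem3} are all statements about commutative unital Banach algebras and hence available in $B$.

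Write $K:=\limsup_{t\to\infty}\|C(t)-I\|<2$. Because $\rho(\cdot)\le\|\cdot\|$ holds in any Banach algebra, the hypothesis of Proposition \ref{prop1} is satisfied, namely $\limsup_{t\to\infty}\rho(C(t)-I)\le K<2$, so $\rho(C(s)-I)=0<1$ for every $s\in\R$. Since $K<2$, there is $T$ with $\|C(t)-I\|\le 2$ for all $t\ge T$; hence for every $s\ge T/2$ both hypotheses of Lemma \ref{lem3} hold and
\begin{equation*}
C(s)=\sqrt{I-\frac{I-C(2s)}{2}}.
\end{equation*}
Applying the estimate (\ref{ineq:sqrt}) with $x=\tfrac12\left(I-C(2s)\right)$, whose norm is at most $1$ for $s\ge T/2$, then yields
\begin{equation*}
\|C(s)-I\|\le 1-\sqrt{1-\tfrac12\|C(2s)-I\|}\qquad(s\ge T/2).
\end{equation*}

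The decisive step is to let $s\to\infty$. Since $s\mapsto 2s$ is a homeomorphism of a neighbourhood of $+\infty$, we have $\limsup_{s\to\infty}\|C(2s)-I\|=K$; and $g(x):=1-\sqrt{1-x/2}$ is continuous and increasing on $[0,2)$, so the limit superior may be passed through $g$. Taking $\limsup_{s\to\infty}$ in the last display therefore produces the single scalar inequality
\begin{equation*}
K\le 1-\sqrt{1-\tfrac{K}{2}}.
\end{equation*}
A short computation shows that the only $K\in[0,2)$ satisfying this is $K=0$: rearranging forces $\sqrt{1-K/2}\le 1-K$, which already fails for $K>1$, while squaring in the range $K\in(0,1]$ leads to $K(K-\tfrac32)\ge 0$, again impossible. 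Hence $\limsup_{t\to\infty}\|C(t)-I\|=0$, and Lemma \ref{lem1} gives $C(t)=I$ for all $t\in\R$. I expect the only genuinely delicate point to be the interchange of $\limsup$ with the nonlinear map $g$ in this last paragraph, together with the invariance of the limit superior under doubling; the remainder is a direct transcription of the sup-law argument.
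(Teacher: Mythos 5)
Your proposal is correct and follows essentially the same route as the paper: Proposition \ref{prop1} to kill the spectral radius, Lemma \ref{lem3} together with (\ref{ineq:sqrt}) to get $\|C(s)-I\|\le 1-\sqrt{1-\tfrac12\|C(2s)-I\|}$, passing to the $\limsup$ to obtain $S\le 1-\sqrt{1-S/2}$ and hence $S=0$, and finally Lemma \ref{lem1}. Your two extra precautions --- working in the commutative subalgebra generated by the family and restricting to $s\ge T/2$ so that the hypothesis $\|C(2s)-I\|\le 2$ of Lemma \ref{lem3} actually holds --- are legitimate refinements that the paper leaves implicit, but they do not change the argument.
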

\begin{proof}
By Proposition \ref{prop1}, we have that $\rho(C(t)-I)=0$ for $t\in\R$. Thus, we can apply Lemma \ref{lem3} and Eq.\ (\ref{ineq:sqrt}) so that for all $s\in\R$,
\begin{equation*}
\left\| I - C(s) \right\| \leq 1 - \sqrt{1-\left\|\frac{I-C(2s)}{2}\right\|}\leq1.
\end{equation*}
With $S:=\limsup_{s\to\infty}\|C(s)-I\|$, this yields that
\begin{equation*}
S\leq 1- \sqrt{1-\frac{S}{2}}\leq1,
\end{equation*}
which implies that $S=0$. Hence, Lemma \ref{lem1} concludes the assertion.
\end{proof}

\begin{rem}
After finishing this note, Esterle pointed out that, alternatively, \cite[Theorem 2.3]{Esterle2015} implies that for a bounded cosine sequence with $\rho(C(1)-I)=0$, it follows that $C(t)=\cos(at)I$ for all $t\in\R$ and some $a\in\R$. Thus, $\limsup_{t\to\infty}\|C(t)-I\|<2$ implies $C(t)=I$ for all $t\in\R$ and therefore, the use of Lemma \ref{lem3} can be omitted.

\end{rem}

\begin{rem}
It is clear that Theorem \ref{thm:limsupinftwolaw} generalizes the \ref{sup} with $r=2$. We remark that the known proofs of the \ref{sup}, see \cite{ChojnackiZeroTwo,SchweZwaZeroTwo}, which use a diagonalisation argument and the \ref{limsup}, can not be generalized to the assertion of Theorem \ref{thm:limsupinftwolaw}.
\end{rem}
\subsection{A discrete $\limsup$-law}
For discrete cosine families, or \textit{cosine sequences} $\left(C(n)\right)_{n\in\Z}$, the following was proved in \cite{SchweZwaZeroTwo} (There, it was formulated for the special case of $C:\Z\rightarrow \mathcal{B}(X)$ for a Banach space $X$. However, the proof is the same for general Banach-algebra-valued cosine families)
\begin{Theorem}[\cite{SchweZwaZeroTwo}]
 Let $\left(C(n)\right)_{n\in\Z}$ be a discrete cosine family in a unital Banach algebra. Then,
 \begin{equation*}
  \sup_{n\in\N}\|C(n)-I\|<\frac{3}{2}\ \implies \ C(n)=I \ \forall n\in\Z.
 \end{equation*}
\end{Theorem}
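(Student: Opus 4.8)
The plan is to run the discrete counterpart of the argument behind Theorem~\ref{thm:limsupinftwolaw}, the only genuinely new ingredient being a scalar analysis that produces the constant $\tfrac32$ in place of $2$. Throughout, write $c:=C(1)$ and recall that the discrete d'Alembert recurrence $C(n+1)+C(n-1)=2C(1)C(n)$ together with $C(0)=I$ forces $C(n)=T_{n}(c)$, where $T_{n}$ is the $n$-th Chebyshev polynomial of the first kind (so $T_{n}(\cos\theta)=\cos(n\theta)$ and $T_{n}(1)=1$), and that $C(-n)=C(n)$. Hence it suffices to treat $n\ge1$ and to prove $c=I$.

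First I would establish the scalar statement: if $\cosinec$ is a complex-valued cosine sequence with $\sup_{n\in\N}|c(n)-1|<\tfrac32$, then $c(n)=1$ for all $n$. Setting $\lambda:=c(1)$, finiteness of the supremum makes $(c(n))=(T_{n}(\lambda))$ bounded, which forces $\lambda\in[-1,1]$; writing $\lambda=\cos\theta$ with $\theta\in[0,\pi]$ gives $c(n)=\cos(n\theta)$. The key elementary fact is that $\inf_{n\in\N}\cos(n\theta)\le-\tfrac12$ for every $\theta\in(0,\pi]$: if $\theta/\pi$ is irrational the orbit $\{n\theta\bmod2\pi\}$ is dense, while if $\theta=2\pi p/q$ in lowest terms with $q\ge2$ the orbit equals $\{2\pi k/q:0\le k<q\}$, and some $k$ lands in $[q/3,2q/3]$, placing $n\theta$ in the arc $[2\pi/3,4\pi/3]$ on which $\cos\le-\tfrac12$. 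Consequently $\sup_{n}|\cos(n\theta)-1|\ge\tfrac32$ whenever $\theta\neq0$, so the hypothesis forces $\theta=0$, i.e.\ $\lambda=1$ and $c\equiv1$. This is the step I expect to be the main obstacle, since it is exactly the arithmetic of which rational rotations avoid the bad arc that pins the optimal constant at $\tfrac32$ (it is attained at $\theta=2\pi/3$, where $\cos(n\theta)$ reaches $-\tfrac12$ but never $-1$).

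With the scalar case in hand the remaining steps mirror the continuous proof. Passing to the closed commutative subalgebra generated by $I$ and $c$ and applying its characters $\chi$, each $\left(\chi(C(n))\right)_{n}$ is a scalar cosine sequence with $\sup_{n}|\chi(C(n))-1|\le\sup_{n}\|C(n)-I\|<\tfrac32$; the scalar statement gives $\chi(c)=1$ for all $\chi$, whence $\rho(c-I)=0$. Since $C(n)-I=T_{n}(c)-T_{n}(1)I=(c-I)q_{n}(c)$ for a polynomial $q_{n}$, submultiplicativity of the spectral radius in the commutative algebra yields $\rho(C(n)-I)\le\rho(c-I)\rho(q_{n}(c))=0$ for every $n$.

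Finally I would feed this into the square-root argument. Taking $m=n$ in d'Alembert gives $2C(n)^{2}=C(2n)+I$, so $C(n)^{2}=I-x$ with $x:=\tfrac{I-C(2n)}{2}$ and $\|x\|<\tfrac34<1$; because $\rho(C(n)-I)=0<1$ and $\|C(2n)-I\|<2$, the discrete analogue of Lemma~\ref{lem3} identifies $C(n)$ with the principal root $\sqrt{I-x}$. Inequality~\eqref{ineq:sqrt} then yields, for every $n\ge1$,
\begin{equation*}
\|C(n)-I\|\le 1-\sqrt{1-\tfrac{\|C(2n)-I\|}{2}}.
\end{equation*}
Writing $S:=\sup_{n\ge1}\|C(n)-I\|<\tfrac32$ and taking the supremum (using that $\{2n:n\ge1\}\subseteq\{n\ge1\}$) gives the self-referential bound $S\le1-\sqrt{1-S/2}$, whose only roots are $S=0$ and $S=\tfrac32$; since $1-\sqrt{1-S/2}<S$ on $(0,\tfrac32)$, the constraint forces $S=0$. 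Hence $C(n)=I$ for all $n\ge1$, and by $C(-n)=C(n)$ for all $n\in\Z$.
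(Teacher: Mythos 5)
Your proof is correct, but it takes a genuinely different (and much heavier) route than the one the paper has in mind. The theorem is quoted from \cite{SchweZwaZeroTwo}, and the paper explicitly says its proof rests on ``an elegant idea of Arendt'': from d'Alembert with $m=n$ one gets $C(2n)-I=2C(n)^2-2I=4(C(n)-I)+2(C(n)-I)^2$, hence $4\|C(n)-I\|\leq\|C(2n)-I\|+2\|C(n)-I\|^2$; taking suprema, $S:=\sup_{n\in\N}\|C(n)-I\|$ satisfies $4S\leq S+2S^2$, i.e.\ $S=0$ or $S\geq\tfrac32$ --- three lines, no spectral theory. You instead transplant the Esterle machinery of Theorem \ref{thm:limsupinftwolaw} to the discrete setting: a scalar classification of bounded cosine sequences $T_n(\lambda)=\cos(n\theta)$ via rotation orbits (this is where your $\tfrac32$ enters, through the correct observation that every nontrivial orbit $\{n\theta \bmod 2\pi\}$ meets the arc where $\cos\leq-\tfrac12$), Gelfand theory to get $\rho(C(n)-I)=0$, and then Lemma \ref{lem3} with (\ref{ineq:sqrt}) to close the loop via $S\leq 1-\sqrt{1-S/2}$. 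All of these steps check out; the only slip is cosmetic: $S=\tfrac32$ solves the squared equation $1-S/2=(1-S)^2$ but not $S=1-\sqrt{1-S/2}$ itself, though your operative fact that $1-\sqrt{1-S/2}<S$ on $(0,\tfrac32)$ is what you actually use and is true. What your route buys is a conceptual account of why $\tfrac32$ is critical --- it is precisely the threshold below which the scalar obstruction $\cos(2\pi n/3)$ is excluded, which also gives optimality for free --- whereas Arendt's argument produces $\tfrac32$ as the nonzero root of $2S^2-3S=0$ with essentially no work; what it costs is that the character and square-root steps are superfluous for this particular statement.
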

The proof is based on an elegant idea of Arendt, which can directly be applied to weaken the $\sup$ to $\limsup$ in the theorem.
\begin{Theorem}
 Let $\left(C(n)\right)_{n\in\Z}$ be a discrete cosine family in a unital Banach algebra. Then,
 \begin{equation*}
  \limsup_{n\to\infty}\|C(n)-I\|<\frac{3}{2}\ \implies \ C(n)=I \ \forall n\in\Z.
 \end{equation*}
\end{Theorem}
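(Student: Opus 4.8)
The plan is to reduce the $\limsup$-statement to the $\sup$-version stated just above it, exploiting the algebraic rigidity of cosine families. The hypothesis $\limsup_{n\to\infty}\|C(n)-I\|<\frac32$ provides some $N\in\N$ with $\|C(n)-I\|<\frac32$ for every $n\ge N$; in particular the tail $(C(n))_{n\ge N}$ is bounded. My first step is to pass to the subsampled family $\tilde C(n):=C(Nn)$, $n\in\Z$. A direct check on d'Alembert's identity shows that $(\tilde C(n))_{n\in\Z}$ is again a discrete cosine family, and since $Nn\ge N$ for $n\ge1$ it satisfies $\sup_{n\in\N}\|\tilde C(n)-I\|<\frac32$. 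Applying the preceding $\sup$-theorem to $\tilde C$ then yields $\tilde C(n)=I$ for all $n$, and in particular $C(N)=I$.

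The second and crucial step is to upgrade $C(N)=I$ to full $N$-periodicity of $(C(n))_{n\in\Z}$. Feeding $C(N)=I$ into d'Alembert gives $C(n+N)+C(n-N)=2C(n)C(N)=2C(n)$ for all $n$, which says that along each residue class the second difference of $k\mapsto C(n+kN)$ vanishes; by induction $C(n+kN)=C(n)+k\,\delta_n$ with $\delta_n:=C(n+N)-C(n)$, an affine function of $k$. I then fix $n$ and let $k\to\infty$: the left-hand side stays bounded, since for large $k$ the index $n+kN$ lies in the tail where $\|C(m)-I\|<\frac32$, whereas $\|C(n)+k\delta_n\|\to\infty$ unless $\delta_n=0$. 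Hence $C(n+N)=C(n)$ for every $n$, i.e. the family is $N$-periodic.

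Periodicity now closes the loop. Every term $C(r)$ equals $C(r+kN)$ for arbitrarily large $k$, and those tail values satisfy $\|C(r+kN)-I\|<\frac32$; therefore $\sup_{n\in\N}\|C(n)-I\|<\frac32$ for the full family. A second application of the $\sup$-theorem then gives $C(n)=I$ for all $n\in\Z$, as claimed.

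I expect the periodicity step to be the only genuine obstacle: the subsampling and the final invocation of the $\sup$-law are routine, but one must notice that $C(N)=I$ forces an arithmetic-progression structure on each residue class modulo $N$, and then use boundedness of the tail to annihilate the linear term $\delta_n$. This is precisely where the passage from $\sup$ to $\limsup$ is paid for, and it plays the same role that boundedness of the tail plays in the continuous reasoning behind Theorem \ref{thm:limsupinftwolaw}.
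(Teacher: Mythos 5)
Your proof is correct, but it takes a genuinely different route from the paper's. The paper does not spell its argument out: it only remarks that Arendt's proof of the $\sup$-version ``can directly be applied'' with $\limsup$ in place of $\sup$. That argument runs the identity $4(C(n)-I)=(C(2n)-I)-2(C(n)-I)^{2}$ (a rewriting of d'Alembert's $C(2n)+I=2C(n)^{2}$) through the limit superior: with $L:=\limsup_{n\to\infty}\|C(n)-I\|$ one gets $4L\le L+2L^{2}$, hence $L=0$ or $L\ge\frac32$, and $L=0$ together with d'Alembert as in Lemma~\ref{lem1} forces $C(n)=I$. You instead use the $\sup$-theorem as a black box: subsample along $N\Z$ to land in its hypotheses and extract $C(N)=I$; observe that $C(N)=I$ makes $k\mapsto C(n+kN)$ affine in $k$, so boundedness of the tail annihilates the linear term and yields $N$-periodicity; then the tail bound propagates to all of $\N$ and the $\sup$-theorem applies once more. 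Both arguments are sound. The paper's is shorter and makes transparent why the constant $\frac32$ survives the weakening; yours is more modular (it never reopens Arendt's inequality), and your intermediate step --- a cosine sequence with bounded tail and $C(N)=I$ is $N$-periodic --- is a clean structural fact of independent interest. One small point of phrasing: to invoke the $\sup$-theorem you need $\sup_{n\ge1}\|C(Nn)-I\|$ \emph{strictly} below $\frac32$, which follows from the uniform tail bound $\|C(n)-I\|\le\frac32-\varepsilon$ for $n\ge N$ that $\limsup<\frac32$ provides, not merely from the pointwise bound $\|C(n)-I\|<\frac32$ as you wrote it.
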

This result is optimal as can be seen by $C(n)=\cos(\frac{2n\pi}{3})$ which yields $\limsup_{n\to\infty}\|C(n)-I\|=\frac{3}{2}$, see \cite[Theorem 3.2]{SchweZwaZeroTwo}.
\section{The corresponding semigroup result}
Let us finally state the corresponding result for (discrete) semigroups in a unital normed algebra, which is a corollary of a well-known result by Wallen \cite{Wallen}.
\begin{Theorem}\label{thm:SGlimsupinflaw}
Let $\left\{T_{n}\right\}_{n\in\N}$ be a semigroup in a normed unital algebra. Then,
\begin{equation}
\limsup_{n\to\infty}\|T_{n}-I\|<1\ \implies \ T_{n}=I \ \forall n\in\N.
\end{equation}
\end{Theorem}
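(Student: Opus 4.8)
The plan is to exploit the fact that a discrete semigroup is generated by a single element and then to invoke Wallen's theorem on the powers of that element. First I would note that the semigroup law $T_{n+m}=T_nT_m$ forces $T_n=T_1^{\,n}$ for every $n\in\N$; writing $x:=T_1$, the hypothesis becomes $\limsup_{n\to\infty}\|x^n-I\|<1$ and the conclusion to be proved is simply $x=I$ (whence $T_n=x^n=I$ for all $n\in\N$). Thus the entire statement collapses to a question about the powers of one element of the algebra.

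The key input is then Wallen's result \cite{Wallen}: for an element $x$ of a unital normed algebra, already the condition $\liminf_{n\to\infty}\|x^n-I\|<1$ implies $x=I$. Since $\liminf\le\limsup$, our hypothesis $\limsup_{n\to\infty}\|x^n-I\|<1$ is a fortiori strong enough, so it forces $\liminf_{n\to\infty}\|x^n-I\|<1$, and Wallen's theorem yields $x=I$. This realises the theorem as a direct corollary and explains why no continuity or commutativity assumptions are needed.

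If one wished to be self-contained, I would reprove the spectral reduction in the spirit of Proposition~\ref{prop1}. Passing to the completion, for any $\lambda$ in the spectrum of $x$ the spectral mapping theorem gives $\lambda^n-1\in\sigma(x^n-I)$, hence $|\lambda^n-1|\le\rho(x^n-I)\le\|x^n-I\|$ and therefore $\limsup_{n\to\infty}|\lambda^n-1|<1$. A short case analysis on $|\lambda|$ then forces $\lambda=1$: if $|\lambda|>1$ the quantity blows up, if $|\lambda|<1$ it tends to $1$, and if $|\lambda|=1$ with $\lambda\neq1$ the orbit $(\lambda^n)_n$ visits points of the unit circle at distance at least $\sqrt3$ from $1$ (whether the orbit is finite or dense), so that $\limsup_{n\to\infty}|\lambda^n-1|\ge\sqrt3>1$; in every case we reach a contradiction unless $\lambda=1$. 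Consequently $\sigma(x)=\{1\}$, that is, $x-I$ is quasinilpotent.

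The step I expect to be the genuine obstacle is the passage from ``$x-I$ quasinilpotent and $\limsup_{n\to\infty}\|x^n-I\|<1$'' to ``$x=I$''. The spectral argument only places $x-I$ in the radical, and a quasinilpotent perturbation of the identity may well have powers that are far from $I$; excluding this under the sub-unit bound is precisely the content of Wallen's theorem and is the part that characters alone cannot deliver. My recommendation is therefore to quote Wallen directly rather than to reconstruct this delicate estimate by hand.
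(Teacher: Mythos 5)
Your reduction to powers of a single element ($T_{n}=T_{1}^{\,n}$) followed by an appeal to Wallen is exactly the route the paper takes, but the statement you attribute to Wallen is not his theorem and is in fact false. The implication ``$\liminf_{n\to\infty}\|x^{n}-I\|<1\implies x=I$'' fails already in the Banach algebra $\C$: for $x=-1$ one has $x^{2k}=1$, so $\liminf_{n\to\infty}|x^{n}-1|=0<1$, yet $x\neq1$. What Wallen proves in \cite{Wallen} is the Ces\`aro version: if $\liminf_{n\to\infty}\frac{1}{n}\sum_{k=1}^{n}\|x^{k}-I\|<1$, then $x=I$. Smallness of $\|x^{n}-I\|$ along a subsequence says nothing about the averages (off that subsequence the terms may be large, as in the example), so your ``since $\liminf\le\limsup$'' step is applied to the wrong quantity, and the key input of your proof, as written, is a false lemma.

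The gap is easy to close, and the repaired argument is precisely the paper's one-line proof: from $\limsup_{n\to\infty}\|T_{n}-I\|<1$ choose $c<1$ and $N$ with $\|T_{n}-I\|\le c$ for all $n\ge N$; then $\frac{1}{n}\sum_{j=1}^{n}\|T_{j}-I\|\le\frac{1}{n}\sum_{j=1}^{N-1}\|T_{j}-I\|+c$, whose $\limsup$ in $n$ is at most $c<1$, so the hypothesis of Wallen's theorem in its correct averaged form is satisfied and $T_{1}=I$, hence $T_{n}=T_{1}^{\,n}=I$ for all $n\in\N$. Your spectral digression is correct as far as it goes (it shows $\sigma(T_{1})=\{1\}$ after passing to the completion), and you rightly note that quasinilpotence of $T_{1}-I$ alone cannot finish the argument; but that observation does not repair the misquotation, so restate Wallen's theorem in the Ces\`aro form before citing it.
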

\begin{proof}
If $\limsup_{n\to\infty}\|T_{n}-I\|<1$, then $\liminf_{n\in\N}\frac{1}{n}\sum_{j=1}^{n}\|T_{j}-1\|<1$. By Wallen \cite{Wallen}, the assertion follows.
\end{proof}
\begin{rem}
Clearly, Theorem \ref{thm:SGlimsupinflaw} implies that for a semigroup on $[0,\infty)$, $\left(T(t)\right)_{t\geq0}$, we have that
\begin{equation}
\limsup_{t\to\infty}\|T(t)-I\|<1\ \implies \ T(t)=I \ \forall t\geq0.
\end{equation}
\end{rem}


\end{document}